\documentclass[12pt,a4paper]{article}
\usepackage[utf8]{inputenc}
\usepackage[T1]{fontenc}
\usepackage{amsmath, amsthm, amsfonts, amssymb, amscd, graphicx}
\usepackage[margin=1in]{geometry}
\usepackage{float}
\usepackage{url}
\usepackage[hidelinks]{hyperref}
\usepackage{graphicx}
\usepackage{subfigure}
\usepackage{subcaption}
\usepackage{enumitem}

\newtheorem{theorem}{Theorem}[section]
\newtheorem{lemma}[theorem]{Lemma}
\newtheorem{corollary}[theorem]{Corollary}
\newtheorem{definition}[theorem]{Definition}
\newtheorem{observation}[theorem]{Observation}

\title{Self-identifying codes in direct products of complete graphs with paths and cycles}
\author{
	Jihong Liu$^{1}$, Hao Qi$^{1,*}$, Zhangwei Shan$^{1,2}$\\
	$^{1}$College of Mathematics and Physics, Wenzhou University, Wenzhou, China\\
	$^{2}$School of Mathematical Sciences, Zhejiang Normal University, Jinhua, China\\
	\and
	$^{*}$Corresponding author: \texttt{qihao@wzu.edu.cn}
}

\date{}

\begin{document}
	\maketitle
	\begin{abstract}
		\noindent
		Identifying codes were introduced by Karpovsky et al. as dominating sets $S\subseteq V(G)$ satisfying $N[u]\cap S \neq N[v]\cap S$ for any distinct vertices $u,v$. Later, Junnila et al. introduced the concept of \emph{self-identifying codes} (previously called $(1,\leq1)^+$-identifying codes in earlier work), a dominating set $S\subseteq V(G)$ such that $\bigcap_{c\in N[u]\cap S} N[c] = \{u\}$ for every vertex $u$. In this paper, we obtain bounds on the minimum size of a self-identifying code in the direct products $K_m\times P_n$ and $K_m\times C_n$ that are linear in $n$ with coefficients depending on $m$, and these bounds are asymptotically tight. In particular, for $K_m\times P_n$ with $m,n\ge3$, our bounds closely approaches the size of an identifying code in the same graph, as determined by Shinde and Waphare.
		\\[2mm]
		{\bf Keywords:} self-identifying code; direct product; complete graph; path; cycle 
		\\[2mm]
		{\bf 2020 Mathematics Subject Classification:} 05C69, 05C76, 68R99 
	\end{abstract}
	
	\baselineskip=0.20in

	\section{Introduction}\label{sec-1}
	
	Locating codes were introduced by P. J. Slater in 1988 \cite{Slater1988} in the context of fault detection in nuclear power plants. In 1998, Karpovsky, Chakrabarty, and Levitin \cite{Karpovsky1998} introduced the concept of identifying codes, which has since been studied in various graph families, including Cartesian products \cite{Goddard2013}, rook's graphs (i.e., $K_m \square K_n$) \cite{Gravier2008}, vertex-transitive graphs \cite{Gravier2015}, and binary Hamming spaces \cite{Honkala2001}. Subsequent developments include $t$-robust $1$-identifying codes \cite{Honkala2006} and complexity analyses \cite{Hudry2016}. Jean and Lobstein \cite{JeanLobstein2026} maintain a comprehensive bibliography of over 500 articles on detection systems, including identifying codes, locating-dominating codes, and their fault-tolerant variants.
	
	Junnila et al. introduced the concept of \emph{self-identifying codes} in 2020 \cite{Junnila2020} (previously called $(1,\leq1)^+$-identifying codes in earlier work \cite{Honkala2007}). Recent studies focus on self-identifying codes in specific graph families, including square grids \cite{Honkala2007}, circulants \cite{Junnila2019,Song2021}, and cubic graphs \cite{MATEMATIIKAN2022}. Very recently, Jean and Seo \cite{Jean2025} proved that determining the minimum size of a self-identifying code is NP-complete in general graphs, and studied the code in cubic graphs and infinite grids. Notably, Shinde and Waphare \cite{Shinde2023} determined the minimum size of an identifying code for $K_m \times P_n$.
	
	In this paper, we give sufficient conditions for a subset $S\subseteq V(K_m\times G)$ to be a self-identifying code (which yield an upper bound on the minimum size of such a code) and necessary conditions (which yield a lower bound), thereby deriving bounds for the minimum size of a self-identifying code in $K_m\times G$ with $G\in\{P_n,C_n\}$. Crucially, the density of the smallest self-identifying code in $K_m\times P_n$ asymptotically approaches that of the identifying code established by Shinde and Waphare \cite{Shinde2023}.
	\begin{theorem}\label{main0}
		For $m\ge 3$, the asymptotic density of the smallest self-identifying code in $K_m\times P_n$ (as $n\to\infty$, $n\ge 7$) and in $K_m\times C_n$ (as $n\to\infty$, $n\ge 3$) is $1/3$.
	\end{theorem}
	
	The paper is organized as follows. Section~\ref{sec-2} introduces the necessary notation and preliminaries. In Section~\ref{sec-3}, we present the bounds for $K_m\times P_n$, and in Section~\ref{sec-4} we treat the case $K_m\times C_n$. The proofs of the upper bounds rely on explicit constructions (the sufficient conditions), while the lower bounds follow from combinatorial constraints (the necessary conditions). The density result is discussed in the final part of each section.

	\section{Terminology and Notation}\label{sec-2}
	
	A simple undirected graph $G$ is defined as an ordered pair $(V(G), E(G))$, where $V(G)$ is the vertex set and $E(G)$ the edge set. 
	Given graphs $G$ and $H$, their \emph{direct product} $G \times H$ (Figure~\ref{fig:directproduct}) has vertex set $V(G) \times V(H)$ and edge set:
	\[
	E(G \times H) = \left\{ (g_1, h_1)(g_2, h_2) \mid g_1g_2 \in E(G) \text{ and } h_1h_2 \in E(H) \right\}.
	\]
	For clarity, we adopt the following notation (Figure~\ref{fig:directproduct}):
	\begin{itemize}
		\item $V(K_m) = \{v_0, v_1, \ldots, v_{m-1}\}$, $V(P_n) = \{0,1,\ldots,n-1\}$, and $V(C_n) = \{0,1,\ldots,n-1\}$ with edges between consecutive indices modulo $n$.
		\item The $i$-th \emph{row} $R_i$ in $K_m \times P_n$ (or $K_m \times C_n$) is $R_i = \{(v_i, j) \mid j \in V(P_n)\}$ (or $j\in V(C_n)$).
		\item The $j$-th \emph{column} $C_j$ in $K_m \times P_n$ (or $K_m \times C_n$) is $C_j = \{(v_i, j) \mid v_i \in V(K_m)\}$.
	\end{itemize}
	
	The \emph{open neighborhood} $N(v)$ of a vertex $v$ is its set of adjacent vertices, and the \emph{closed neighborhood} is $N[v] = \{v\} \cup N(v)$. 
	A nonempty subset $D \subseteq V(G)$ is called a \emph{dominating set} if $N[v] \cap D \neq \emptyset$ for every vertex $v \in V(G)$. 
	A nonempty subset $S \subseteq V(G)$ is called a \emph{separating set} if $N[u] \cap S \neq N[v] \cap S$ for all distinct vertices $u, v \in V(G)$. 
	A nonempty subset $C \subseteq V(G)$ is called an \emph{identifying code} if it is both a dominating set and a separating set.
	
	\begin{definition}\label{def:SID1}
		A nonempty subset $S \subseteq V(G)$ is a \emph{self-identifying code} if for each $v \in V(G)$, $N[v] \cap S \neq \emptyset$, and
		\begin{equation}
			\bigcap_{c \in N[v] \cap S} N[c] = \{v\}.
		\end{equation}
	\end{definition}
	
	The following equivalent definition is given in \cite{Junnila2020}.
	
	\begin{definition}\label{def:SID2}
		A nonempty subset $S \subseteq V(G)$ is a self-identifying code if for all distinct $u, v \in V(G)$,
		\begin{equation}
			(N[u] \cap S) \setminus (N[v] \cap S) \neq \emptyset.
		\end{equation}
	\end{definition}
	
	\begin{figure}[H]
		\centering
		\subfigure[$K_3\times P_4$]{
			\label{KPdirectproduct}
			\includegraphics[width=4.5cm]{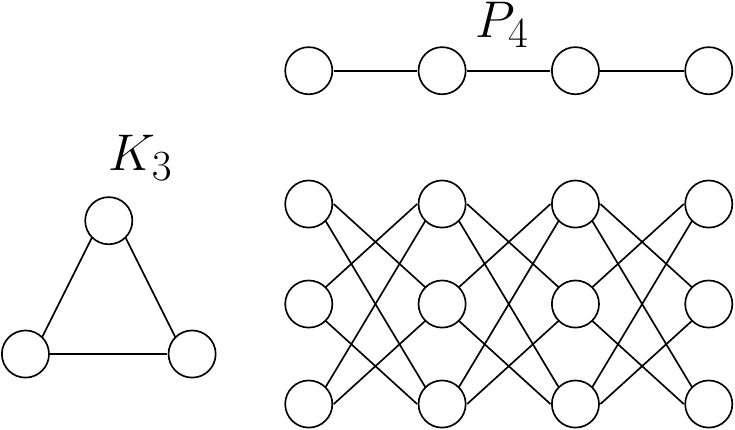}}
		\hspace{0.5in}
		\subfigure[$K_5\times P_5$]{
			\label{self-identifyingexample}
			\includegraphics[width=3.5cm]{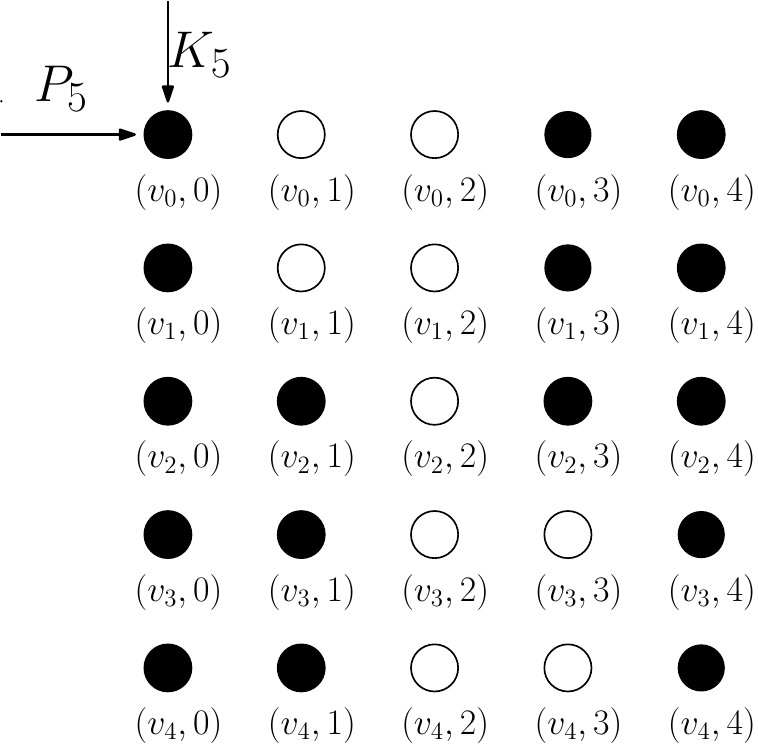}}
		\caption{The direct product $K_3\times P_4$, and a self-identifying code of $K_5\times P_5$ (black vertices).}\label{fig:directproduct}
	\end{figure}
	In figures, vertices in $S$ are marked as black circles. Edges are omitted for clarity in subsequent figures. Undefined terms follow Bondy \cite{Bondy08} and West \cite{West2001}. Let $[a, b] = \{a, a+1, \ldots, b\}$.
	
	The minimum sizes of identifying and self-identifying codes in $G$ are denoted $\gamma^{\mathrm{ID}}(G)$ and $\gamma^{\mathrm{SID}}(G)$, respectively. Then we have $\gamma^{\mathrm{SID}}(G) \ge \gamma^{\mathrm{ID}}(G)$. 
	If $G$ has a self-identifying code, then $G$ is called a \emph{self-identifiable graph}. 
	The following characterization of self-identifiable graphs is given in \cite{Junnila2020}.
	
	\begin{lemma}[\cite{Junnila2020}]\label{lem:self-identifiable-condition}
		A graph $G$ is self-identifiable if and only if for all distinct $u, v \in V(G)$,
		\begin{equation}\label{eq:graph-condition}
			N[u] \setminus N[v] \neq \emptyset.
		\end{equation}
	\end{lemma}
	
	Note that complete graphs $K_m$ (for $m\ge 2$) are not self-identifiable, since all vertices have identical closed neighborhoods.

	\begin{lemma}\label{lem:degree-condition}
		If a connected graph $G$ admits a self-identifying code $S$, then $|N(v) \cap S| \ge 2$ for all $v \in V(G)$.
	\end{lemma}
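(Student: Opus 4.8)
The plan is to argue by contradiction. Fix a vertex $v$ and suppose $|N(v) \cap S| \le 1$; I will show the self-identification requirement $\bigcap_{c \in N[v] \cap S} N[c] = \{v\}$ cannot hold. The one structural input needed is that $G$ is connected with at least two vertices, so $v$ has a neighbour and $N[v] \supsetneq \{v\}$. From there I split into the two cases $N(v) \cap S = \emptyset$ and $N(v) \cap S = \{w\}$ for a single vertex $w$ (necessarily $w \neq v$), and in the latter case I further note whether or not $v \in S$.

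If $N(v) \cap S = \emptyset$, then domination of $v$ forces $v \in S$, whence $N[v] \cap S = \{v\}$, and the intersection over $N[v] \cap S$ is simply $N[v]$, which properly contains $v$ — contradicting self-identification. If $N(v) \cap S = \{w\}$, then $N[v] \cap S$ equals $\{w\}$ or $\{v, w\}$; in either case every term of $\bigcap_{c \in N[v] \cap S} N[c]$ contains $w$, because $w \in N[w]$ trivially and $w \in N[v]$ since $w$ is adjacent to $v$. Hence the intersection contains the vertex $w \neq v$, again contradicting $\bigcap = \{v\}$. Combining the two cases yields $|N(v) \cap S| \ge 2$ for every $v$.

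I expect no genuine obstacle here: the argument is a short case check, and domination is used only to rule out the empty case. The one point needing a line of care is the degenerate one-vertex graph, where $\{v\}$ is vacuously a self-identifying code yet $N(v) \cap S = \emptyset$; this is excluded by reading ``connected graph'' as having at least two vertices, which is the only setting in which the lemma is applied (to $K_m \times P_n$ and $K_m \times C_n$ with $m \ge 3$). As an alternative one could phrase the whole proof through the contrapositive characterization in Definition~\ref{def:sid-equivalent}, exhibiting a vertex $u \neq v$ with $N[u] \subseteq N[v]$, but working directly with the intersection in Definition~\ref{def:sid} is the most transparent route.
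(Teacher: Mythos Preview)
Your argument is correct and essentially identical to the paper's: both proceed by contradiction, handle the empty case via domination to get $N[v]\cap S=\{v\}$ and hence $\bigcap=N[v]\supsetneq\{v\}$, and handle the singleton case by observing that the neighbour $w$ lies in every $N[c]$ appearing in the intersection. Your extra care about the one-vertex graph and the sub-split on whether $v\in S$ are harmless refinements of the same idea.
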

	\begin{proof}
		Suppose $|N(u) \cap S| \le 1$ for some $u \in V(G)$. If $|N(u) \cap S| = 1$, let $N(u) \cap S = \{w\}$. Then $\{u, w\} \subseteq \bigcap_{c \in N[u] \cap S} N[c]$, contradicting Definition~\ref{def:SID1}. If $|N(u) \cap S| = 0$, then $u \in S$ implies $\bigcap_{c \in N[u] \cap S} N[c] = N[u] \neq \{u\}$, as $G$ has no isolated vertices.
	\end{proof}
	
	Lemma~\ref{lem:degree-condition} implies that graphs with leaves (e.g., $P_n$, $K_1 \times P_n$, $K_2 \times P_n$) are not self-identifiable. Thus, we focus on $K_m \times P_n$ and $K_m \times C_n$ for $m, n \ge 3$.
	
	\begin{lemma}\label{lem:product-self-identifiable}
		For $m,n\ge 3$, both $K_m \times P_n$ and $K_m \times C_n$ are self-identifiable graphs.
	\end{lemma}
	\begin{proof}
		By Lemma~\ref{lem:self-identifiable-condition}, it suffices to show $N[u] \setminus N[w] \neq \emptyset$ for any distinct $u,w\in V(K_m\times P_n)$. Let $u=(v_i,j)$ and $w=(v_{i'},j')$. If $j=j'$, then $u$ and $w$ are in the same column but not adjacent since they differ only in the first coordinate, they are not adjacent; choosing a neighbor of $u$ in an adjacent column yields a vertex in $N[u]\setminus N[w]$. If $j\neq j'$, one can similarly find a suitable neighbor. The proof for $K_m\times C_n$ is analogous, taking indices modulo $n$. 
	\end{proof}
	
	Our main results bound $\gamma^{\mathrm{SID}}$ for these products:
	
	\begin{theorem}\label{thm:Pn-bound}
		For $m \ge 3$ and $n \ge 7$,
		\[
		\left\lceil \frac{n+1}{3} \right\rceil (m + 2) - 2 \le \gamma^{\mathrm{SID}}(K_m \times P_n) \le \left\lceil \frac{n+1}{3} \right\rceil (m + 3) + m.
		\]
	\end{theorem}
	
	\begin{theorem}\label{thm:Cn-bound}
		For $m, n \ge 3$,
		\[
		\left\lfloor \frac{n}{3} \right\rfloor (m + 2) \le \gamma^{\mathrm{SID}}(K_m \times C_n) \le \left\lceil \frac{n}{3} \right\rceil (m + 3) + 3.
		\]
	\end{theorem}
	In the following two sections we establish the lower and upper bounds for $K_m\times P_n$ and $K_m\times C_n$, respectively. We begin with the path case.
	\section{Bounds on Self-Identifying Codes of $K_m \times P_n$}\label{sec-3}
	
	In this section, we first present several useful lemmas, corollaries, and preliminary results in Subsection~\ref{nessaryconditionKmPn}, which establish the lower bound. Subsequently, in Subsection~\ref{conboundKmPn}, we construct a self-identifying code for $K_m \times P_n$ with $m,n \geq 3$ to derive the upper bound. 
	
	Recall the definitions: for $0 \leq i \leq m-1$ and $0 \leq j \leq n-1$, we define
	\begin{itemize}
		\item $R_i = \{(v_i,j) : j \in V(P_n)\}$ as the $i$-th row,
		\item $C_j = \{(v_i,j) : v_i \in V(K_m)\}$ as the $j$-th column.
	\end{itemize}
	\subsection{Necessary Conditions for a Self-Identifying Code in $K_m \times P_n$}\label{nessaryconditionKmPn}
	
	This subsection establishes properties of self-identifying codes in $K_m \times P_n$ using row and column structures. Let $S$ denote a self-identifying code in $K_m \times P_n$ with $m, n \geq 3$.

	\begin{lemma}\label{c0cn-1}
		The boundary columns satisfy $C_0 \subseteq S$ and $C_{n-1} \subseteq S$.  
	\end{lemma}
	\begin{proof}
		Assume that there exists $i\in[0,m-1]$ such that $(v_i,0)\in C_0\setminus S$. If there exists $i'\neq i$ with $(v_{i'},1)\notin S$, then $\{(v_i,1),(v_{i'},1)\}\subseteq\bigcap_{c\in N[(v_{i'},1)]\cap S}N[c]$, contradicting Definition~\ref{def:SID1}. Hence $C_1\setminus\{(v_i,1)\}\subseteq S$. Then 
		$N[(v_i,0)]\cap S\subseteq N[(v_i,2)]\cap S$, contradicting Definition~\ref{def:SID2}. Therefore $C_0\subseteq S$. The same argument gives $C_{n-1}\subseteq S$.
	\end{proof}
	By Lemma~\ref{c0cn-1} and Lemma~\ref{lem:degree-condition}, we have the following corollary.
	\begin{corollary}\label{c1cn-2}
		The near-boundary columns satisfy $|C_1 \cap S| \geq 3$ and $|C_{n-2} \cap S| \geq 3$.  
	\end{corollary}
	
	Lemmas~\ref{c0cn-1} ensure boundary constraints, while Corollary~\ref{c1cn-2} gives columns near boundaries. For internal columns, we derive
	
	\begin{lemma}\label{white}
		Let $n \geq 5$, $i \in [0, m-1]$, $j \in [2, n-3]$, and $(v_i, j) \notin S$. Then,
		\begin{enumerate}[font = \normalfont, label=(\arabic*)]
			\item $S \cap (C_{j-1} \cup C_{j+1}) \cap R_{i'} \neq \emptyset$ for all $i' \neq i$, i.e., $|N[(v_i, j)] \cap S| \geq m - 1$, 
			\item $S \cap C_{j-1} \neq \emptyset$ and $S \cap C_{j+1} \neq \emptyset$.
		\end{enumerate}
	\end{lemma}
	\begin{proof}
		Since $(v_i,j)\notin S$, we have $N[(v_i,j)]\subseteq C_{j-1}\cup C_{j+1}$. If there exists $i'\neq i$ such that $\{(v_{i'},j-1),(v_{i'},j+1)\}\cap S=\emptyset$, then $\{(v_i,j),(v_{i'},j)\}\subseteq\bigcap_{c\in N[(v_i,j)]\cap S}N[c]$, contradicting Definition~\ref{def:SID1}. Hence for every $i'\neq i$, we have $S\cap(C_{j-1}\cup C_{j+1})\cap R_{i'}\neq\emptyset$, which gives $|N[(v_i,j)]\cap S|\ge m-1$. If $C_{j-1}\cap S=\emptyset$, then $N[(v_i,j)]\cap S\subseteq N[(v_i,j+2)]\cap S$, contradicting Definition~\ref{def:SID2}. Similarly, $S\cap C_{j+1}\neq\emptyset$.
	\end{proof}
	By Lemma~\ref{white} and Lemma~\ref{lem:degree-condition}, we have the following corollary.
	\begin{corollary}\label{mainwhitec} 
		For $n \geq 5$ and $j \in [2, n-3]$:
		\begin{enumerate}[font = \normalfont, label=(\arabic*)]
			\item If $j \in [3, n-4]$ and $C_j \cap S = \emptyset$, then $(C_{j-1} \cup C_{j+1}) \subseteq S$ for $n \geq 7$.
			\item If $|C_j \cap S| = 1$:
			\begin{itemize}
				\item  For $j=2$, then $|(C_{1}\cup C_{3})\cap S|\ge m$,
				\item For $j=n-3$, then $|(C_{n-4}\cup C_{n-2})\cap S|\ge m$;\label{whitec1}
				\item For $j \in [3, n-4]$, $|(C_{j-1} \cup C_{j+1}) \cap S| \geq m + 1$.
			\end{itemize}
			\item If $2 \leq |C_j \cap S| \leq m - 2$, then $|(C_{j-1} \cup C_{j+1}) \cap S| \geq m$ and $(C_{j-1} \cup C_{j+1}) \cap S \cap R_i \neq \emptyset$ for all $i$.
			\item If $|C_j \cap S| = m - 1$, then $|(C_{j-1} \cup C_{j+1}) \cap S| \geq m - 1$.
			\item If $|C_j \cap S| = m$, then $|(C_{j-1} \cup C_{j+1}) \cap S| \geq 3$.
		\end{enumerate}
	\end{corollary}
	By Corollary~\ref{mainwhitec}, there exists a special lower bound for three consecutive columns.
	\begin{corollary}\label{cj-1cjcj+1}
		Let $n \geq 5$, $i \in [0, m-1]$, $j \in [2, n-3]$, then
		\begin{enumerate}[font = \normalfont, label=(\arabic*)]
			\item For $n = 5$, $|(C_1 \cup C_2 \cup C_3) \cap S| \geq m$.
			\item For $n \geq 6$, $j \in [2, n-3]$, $|(C_{j-1} \cup C_j \cup C_{j+1}) \cap S| \geq m + 1$. 
			\item For $n \geq 7$ and $j \in [3, n-4]$, $|(C_{j-1} \cup C_j \cup C_{j+1}) \cap S| \geq m + 2$. 
		\end{enumerate}
	\end{corollary}
	
	\begin{proof}[Proof of Theorem~\ref{thm:Pn-bound} {\rm (}Lower Bound{\rm )}]
		By Lemma~\ref{c0cn-1},  Corollary~\ref{c1cn-2}, and Corollary~\ref{cj-1cjcj+1}, then
		\begin{align*}
			|(C_0\cup C_1\big)\cap S|&\ge m+3, \\
			|(C_{n-2}\cup C_{n-1}) \cap S| &\geq m + 3,\\
			|(C_{j-1} \cup C_j \cup C_{j+1}) \cap S| &\geq m + 2.
		\end{align*}
		Therefore,
		\[
		\gamma^{\mathrm{SID}}(K_m \times P_n) \geq \left\lceil \frac{n+1}{3} \right\rceil (m + 2) - 2.
		\]
	\end{proof}
	Having established a lower bound via necessary conditions, we now turn to an upper bound by constructing an explicit self-identifying code.
	
	\subsection{Construction of a Self-Identifying Code in $K_m \times P_n$}\label{conboundKmPn}
	In this subsection we construct a self-identifying code for $K_m \times P_n$, which yields an upper bound on $\gamma^{\mathrm{SID}}(K_m \times P_n)$ for $m \ge 3$ and $n \ge 7$. We begin with a sufficient condition for self-identifying codes.
	
	\begin{observation}\label{factKmPn}
		Let $S \subseteq V(K_m \times P_n)$. If the following conditions hold for all $i\in[0,m-1]$ and $j\in[0,n-1]$,
		\begin{itemize}
			\item $|C_1 \cap S| \geq 3$ and $|C_{n-2} \cap S| \geq 3$;
			\item For $j \in [1, n-2]$ and for each vertex $(v_i,j) \in C_j$:
			\begin{itemize}
				\item if $(v_i,j) \in S$, then there exist two distinct indices $i_1,i_2 \in [0,m-1]\setminus\{i\}$ such that $(C_{j-1} \cup C_{j+1}) \cap S \cap R_{i'} \neq \emptyset$ for $i' \in \{i_1,i_2\}$;
				\item if $(v_i,j) \notin S$, then there exist two distinct indices $i_1,i_2 \in [0,m-1]\setminus\{i\}$ such that $(C_{j-1} \cup C_{j+1}) \cap S \cap R_{i'} \neq \emptyset$, and additionally $C_{j-1} \cap S \neq \emptyset$, $C_{j+1} \cap S \neq \emptyset$;
			\end{itemize}
		\end{itemize}
		then $S$ is a self-identifying code of $K_m \times P_n$.
	\end{observation}
	
	By Observation~\ref{factKmPn}, we construct an upper bound of $\gamma^{\mathrm{SID}}(K_m \times P_n)$ in Theorem~\ref{thm:Pn-bound}.
	
	\begin{proof}[Proof of Theorem~\ref{thm:Pn-bound} {\rm (}Upper Bound{\rm )}] 
		For non-negative integers $m,n,t,k$, and $k=\lfloor \frac{n}{3}\rfloor$. 
		We define 
		\[
		\mathcal C_t = 
		\begin{cases}
			C_{3t-1} \cup C_{3t} \cup C_{3t+1} & \text{for } 1 \leq t \leq k - 2, \\
			C_{n-4} \cup C_{n-3} & \text{for $t=k-1$  and } n = 3k, \\
			C_{n-5} \cup C_{n-4} \cup C_{n-3} & \text{for $t=k-1$  and } n = 3k + 1, \\
			C_{n-6} \cup C_{n-5} \cup C_{n-4} \cup C_{n-3} & \text{for $t=k-1$  and } n = 3k + 2.
		\end{cases}
		\]
		For the partition $V(K_m\times P_n)=C_0\cup C_1\cup \left(\bigcup_{t=1}^{k-2} \mathcal C_t\right)\cup \mathcal C_{k-1}\cup C_{n-2} \cup C_{n-1}$, we construct a subset $S$ of $V(K_m\times P_n)$ with  $S = S_1 \cup S_2 \cup S_3 \cup S_4$, where 
		\begin{equation*}
			\begin{aligned}
				S_1 &= ( C_0 \cup C_{n-1})\cap S = C_0 \cup C_{n-1}, \\
				S_2 &= ( C_1 \cup C_{n-2})\cap S=\{(v_0, 1), (v_1, 1), (v_2, 1), (v_0, n-2), (v_1, n-2), (v_2, n-2)\}, \\
				S_3 &=\left(\bigcup\nolimits_{t=1}^{k-2} \mathcal C_t\right) \cap S =
				\begin{cases}
					\bigcup_{t=1,3,5,\dots,k-3} (A_t \cup C_{3t} \cup C_{3t+3}) & \text{if } k \text{ even}, \\
					\left(\bigcup_{t=1,3,5,\dots,k-4} A'_t \cup C_{3t} \cup C_{3t+3}\right) \cup A'' \cup C_{3k-6} & \text{if } k \text{ odd},
				\end{cases}\\
				S_4 &=\mathcal C_{k-1}\cap S= 
				\begin{cases}
					\{(v_0, n-3), (v_1, n-3), (v_2, n-3)\} \cup C_{n-4} & \text{if } n = 3k, \\
					\{(v_2, n-5), (v_0, n-3), (v_1, n-3)\} \cup C_{n-4} & \text{if } n = 3k + 1, \\
					\{(v_2, n-6), (v_0, n-4), (v_1, n-4)\} \cup C_{n-5} \cup C_{n-3} & \text{if } n = 3k + 2,
				\end{cases} 
			\end{aligned}
		\end{equation*}
		\begin{align*}
			A_t &= \{(v_2, 3t-1), (v_0, 3t+1), (v_1, 3t+1), (v_0, 3t+2), (v_1, 3t+4), (v_2, 3t+4)\},\\
			A'_t &= \{(v_0, 3t-1), (v_1, 3t+1), (v_2, 3t+1), (v_2, 3t+2), (v_0, 3t+4), (v_1, 3t+4)\},\\
			A''&=\{(v_0, 3k-7), (v_1, 3k-5), (v_2, 3k-5)\}.
		\end{align*}
		By Observation~\ref{factKmPn}, $S$ is a self-identifying code of $K_m \times P_n$ (Figures~\ref{n=3k,3k+1,3k+2even} and~\ref{n=3k,3k+1,3k+2odd}). Thus
		\[
		\gamma^{\mathrm{SID}}(K_m \times P_n) \leq 
		\begin{cases}
			(k + 1)(m + 3) & \text{if } n = 3k, \\
			(k + 1)(m + 3) & \text{if } n = 3k + 1, \\
			(k + 1)(m + 3) + m & \text{if } n = 3k + 2.
		\end{cases}
		\]
		\begin{figure}[H]	
			\centering
			\subfigure[$n=3k$, $k$ is even]{
				\label{n=3keven} 
				\includegraphics[width=3.15cm]{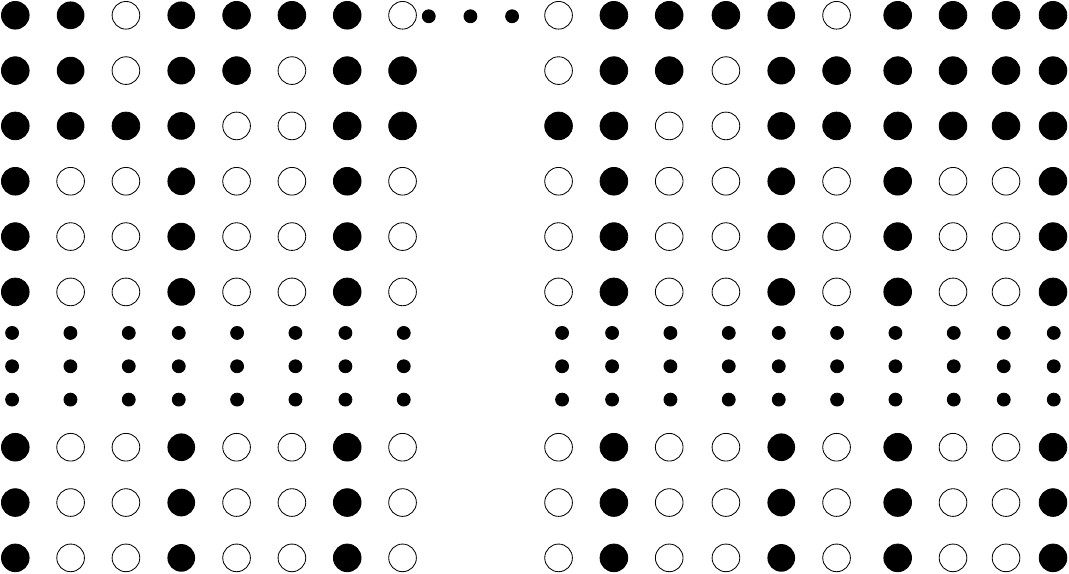}}
			\hspace{0.15in}
			\subfigure[$n=3k+1$, $k$ is even]{
				\label{n=3k+1even} 
				\includegraphics[width=3.35cm]{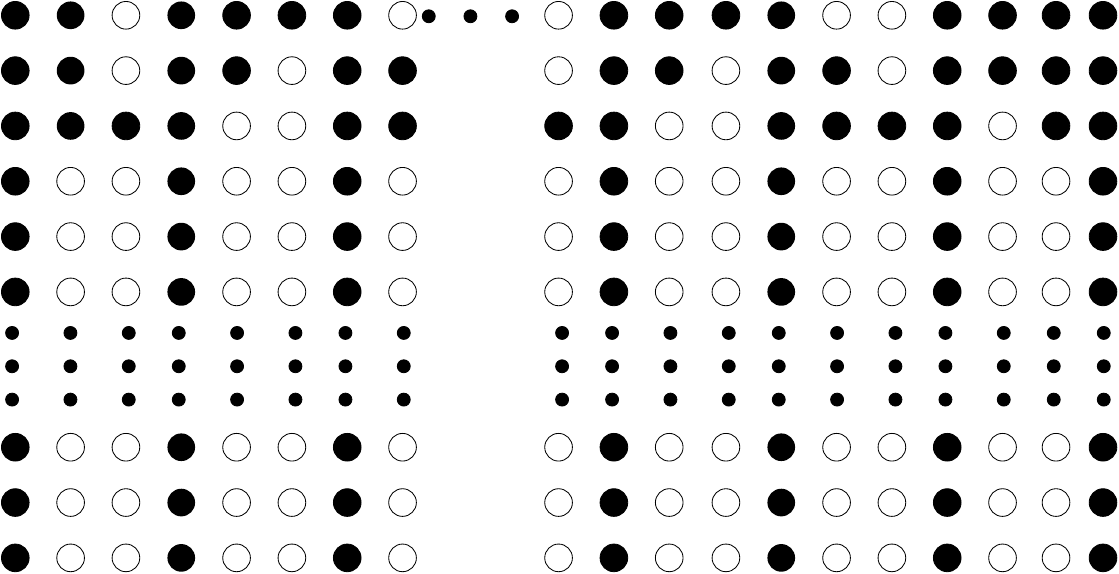}}
			\hspace{0.15in}
			\subfigure[$n=3k+2$, $k$ is even]{
				\label{n=3k+2even} 
				\includegraphics[width=3.75cm]{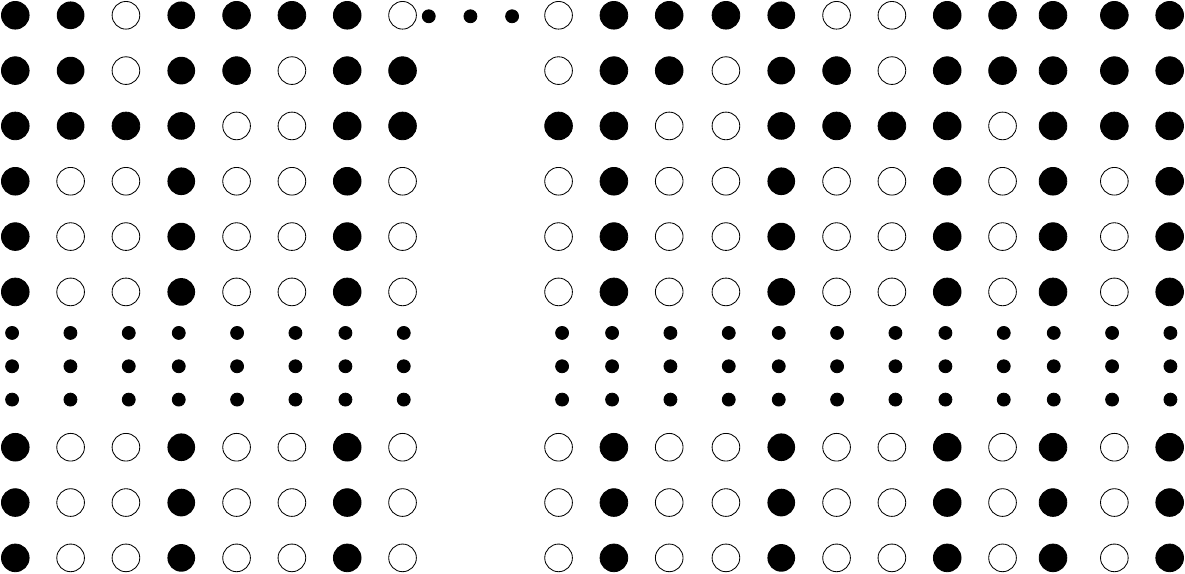}}
			\caption{A self-identifying code of $K_m\times P_n$, where $n$ is even.}\label{n=3k,3k+1,3k+2even}
		\end{figure}
		\begin{figure}[H]	
			\centering
			\subfigure[$n=3k$, $k$ is odd]{
				\label{n=3kodd} 
				\includegraphics[width=3.15cm]{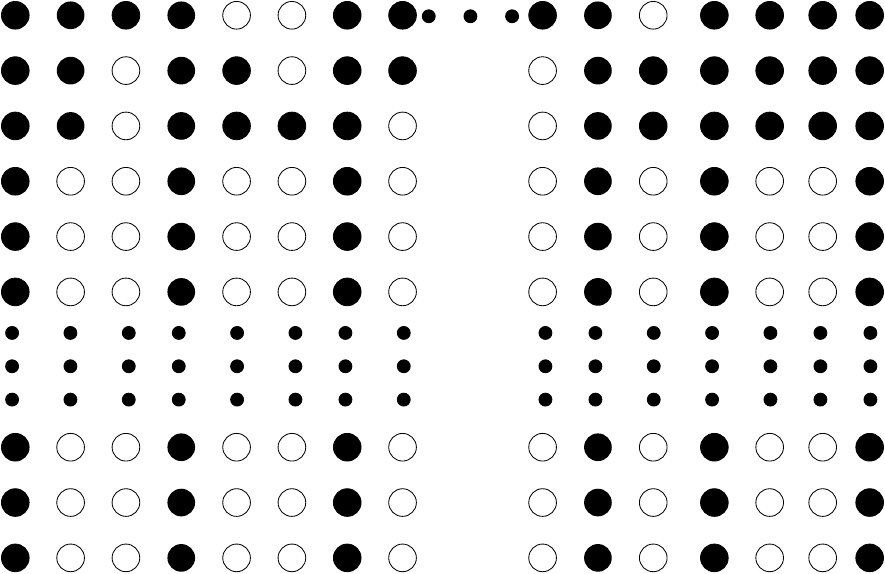}}
			\hspace{0.15in}
			\subfigure[$n=3k+1$, $k$ is odd]{
				\label{n=3k+1odd} 
				\includegraphics[width=3.35cm]{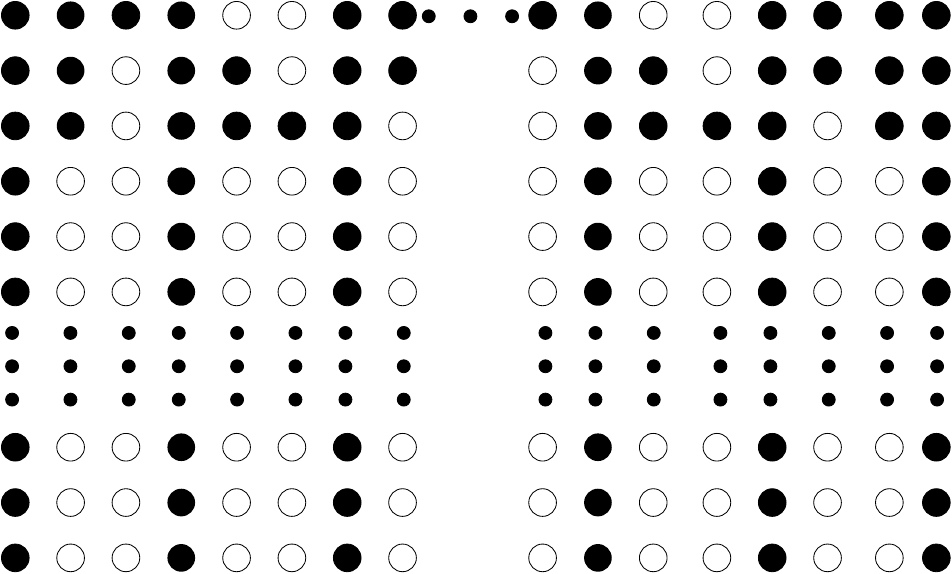}}
			\hspace{0.15in}
			\subfigure[$n=3k+2$, $k$ is odd]{
				\label{n=3k+2odd} 
				\includegraphics[width=3.75cm]{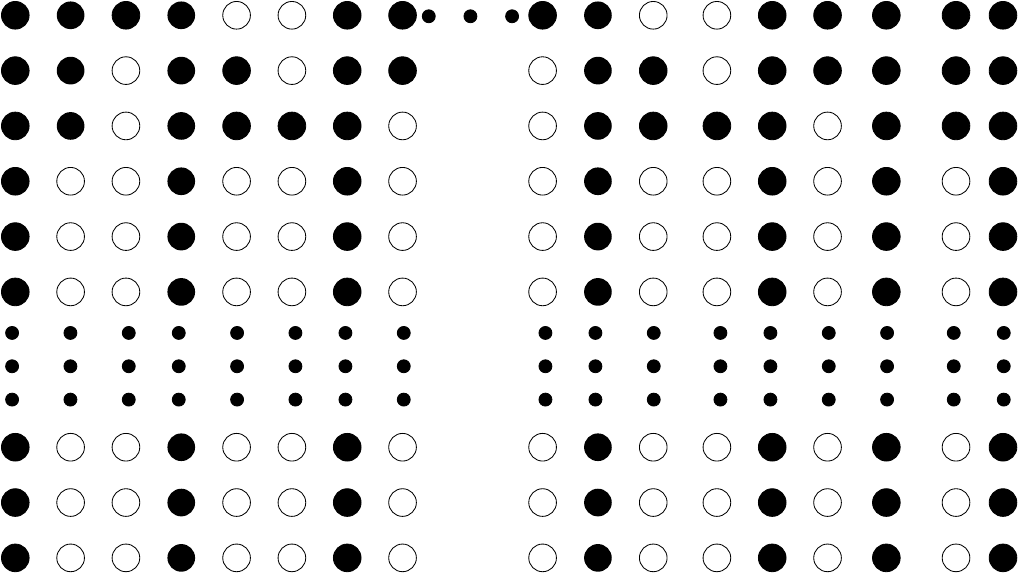}}
			\caption{A self-identifying code of $K_m\times P_n$, where $n$ is odd.}\label{n=3k,3k+1,3k+2odd}
		\end{figure}
		The upper bound of $\gamma^{\mathrm{SID}}(K_m\times P_n)$ for $m\ge 3$ and $3\le n\le 6$, due to its technical intricacy, is deferred to the Appendix~A.
		Hence,  we prove Theorem~\ref{thm:Pn-bound}.
	\end{proof}
	
	We now consider the direct product of a complete graph $K_m$ with a cycle $C_n$, and obtain analogous bounds.

	\section{Bounds on Self-Identifying Codes of $K_m \times C_n$}\label{sec-4}
	In this section, we consider self-identifying codes in $K_m\times C_n$ for $m,n\ge 3$. Using a similar method as in Section~\ref{sec-3}, we derive lower and upper bounds for $\gamma^{\mathrm{SID}}(K_m\times C_n)$, which together establish Theorem~\ref{thm:Cn-bound}. We use the same notation for rows and columns as before: $R_i = \{(v_i,j): j\in V(C_n)\}$ (the $i$-th row) and $C_j = \{(v_i,j): v_i\in V(K_m)\}$ (the $j$-th column). In this section, all column indices are taken modulo $n$. 
	
	Let us assume $S$ is a self-identifying code of $K_m \times C_n$, where $m,n\ge 3$. The analysis is similar to the path case; in analogy with Corollary~\ref{cj-1cjcj+1}, we obtain the following lemma, which leads to the lower bound.
	\begin{lemma}\label{cj-1cjcj+1KmCn}
		For any $0 \leq j \leq n-1$, $|(C_{j-1} \cup C_j \cup C_{j+1}) \cap S| \geq m + 2$.
	\end{lemma}
	\begin{corollary}[Lower Bound in Theorem~\ref{thm:Cn-bound}]\label{KmCnlowerbound}
		Let $m,n\ge 3$, then
		\[\gamma^{\mathrm{SID}}(K_m \times C_n) \geq \left\lfloor \frac{n}{3} \right\rfloor (m + 2)\] 
	\end{corollary}
	Now we will construct a self-identifying code for $K_m \times C_n$, which yields an upper bound on $\gamma^{\mathrm{SID}}(K_m \times C_n)$ for $m,n \geq 3$. We begin with a sufficient condition for self-identifying codes as follows.
	\begin{observation}\label{factKmCn}
		Let $S \subseteq V(K_m \times C_n)$. If the following conditions hold for all $i \in [0, m-1]$ and $j \in [0, n-1]$,
		\begin{itemize}
			\item For $(v_i, j) \in S$, there exist two distinct indices $i_1, i_2\in [0,m-1]\setminus \{i\}$ such that $(C_{j-1} \cup C_{j+1}) \cap S \cap R_{i'} \neq \emptyset$ for $i' \in \{i_1, i_2\}$.
			\item For $(v_i, j) \notin S$, there exist two distinct indices $i_1,i_2\in[0,m-1]\setminus\{i\}$ such that $(C_{j-1} \cup C_{j+1}) \cap S \cap R_{i'} \neq \emptyset$, and $C_{j-1} \cap S \neq \emptyset$, $C_{j+1} \cap S \neq \emptyset$.
		\end{itemize}
		Then $S$ is a self-identifying code of $K_m \times C_n$.
	\end{observation}
	
	By Observation~\ref{factKmCn}, we construct an upper bound of $\gamma^{\mathrm{SID}}(K_m \times C_n)$ in Theorem~\ref{thm:Cn-bound}.
	
	For non-negative integers $m,n,t,k$, and $k=\lfloor \frac{n}{3}\rfloor$. Define a nonempty set $S \subseteq V(K_m\times C_n)$ as follows. Let 
	\begin{align*}
		T=\{0,2,4,6,\dots,k-2\} & ~~~~~~\text{if $k$ is even}\\
		T'=\{0,2,4,6,\dots,k-3\} & ~~~~~~\text{if $k$ is odd}
	\end{align*}
	If $k$ is even, then let
	\[
	S = 
	\begin{cases}
		\bigcup_{t\in T} (B_{t} \cup C_{3t+1} \cup C_{3t+4}) & \text{if } n = 3k, \\
		\bigcup_{t\in T} (B_{t} \cup C_{3t+1} \cup C_{3t+4}) \cup C_{3k} & \text{if } n = 3k + 1, \\
		\bigcup_{t\in T} (B_{t} \cup C_{3t+1} \cup C_{3t+4}) \cup B' \cup C_{3k+1} & \text{if } n = 3k + 2,
	\end{cases}
	\]
	If $k$ is odd, then let
	\[
	S = 
	\begin{cases}
		\bigcup_{t\in T'} (B_{t} \cup C_{3t+1} \cup C_{3t+4}) \cup B'' \cup C_{3k-2} & \text{if } n = 3k, \\
		\bigcup_{t\in T'} (B_{t} \cup C_{3t+1} \cup C_{3t+4}) \cup B'' \cup C_{3k-2} \cup C_{3k} & \text{if } n = 3k + 1, \\
		\bigcup_{t\in T'} (B_{t} \cup C_{3t+1} \cup C_{3t+4}) \cup B'' \cup C_{3k-2} \cup B' \cup C_{3k+1} & \text{if } n = 3k + 2,
	\end{cases}
	\]
	where \begin{align*}
		B_t &= \{(v_0, 3t), (v_1, 3t+2), (v_2, 3t+2), (v_2, 3t+3), (v_0, 3t+5), (v_1, 3t+5)\}\\
		B'&=\{(v_0, 3k), (v_1, 3k), (v_2, 3k)\}\\
		B''&=\{(v_0, 3k-3), (v_1, 3k-3), (v_2, 3k-3),(v_0, 3k-1), (v_1, 3k-1), (v_2, 3k-1)\}.
	\end{align*}
	\begin{figure}[H]	
		\centering
		\subfigure[$n=3k$, $k$ is even]{
			\label{n=3kevenCn} 
			\includegraphics[width=3.15cm]{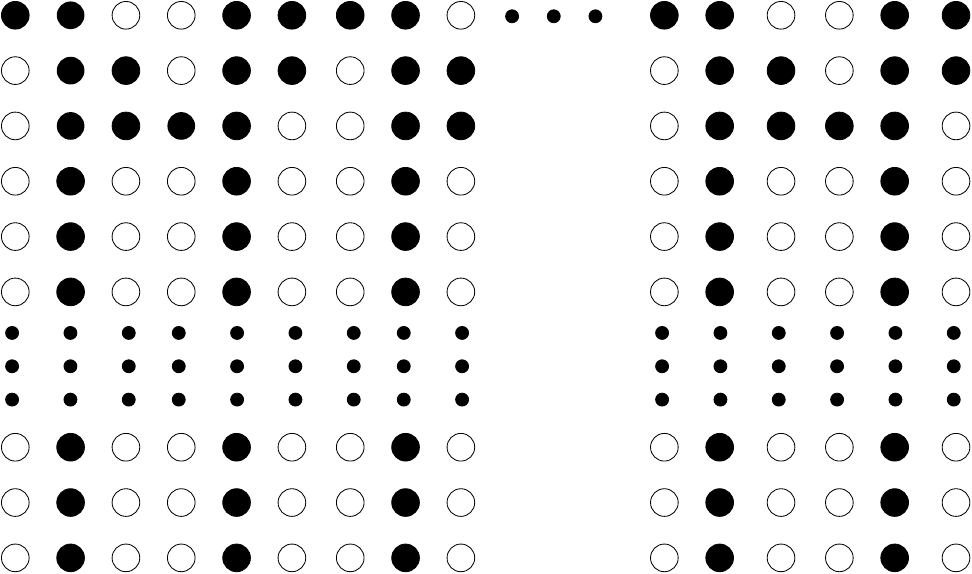}}
		\hspace{0.15in}
		\subfigure[$n=3k+1$, $k$ is even]{
			\label{n=3k+1evenCn} 
			\includegraphics[width=3.35cm]{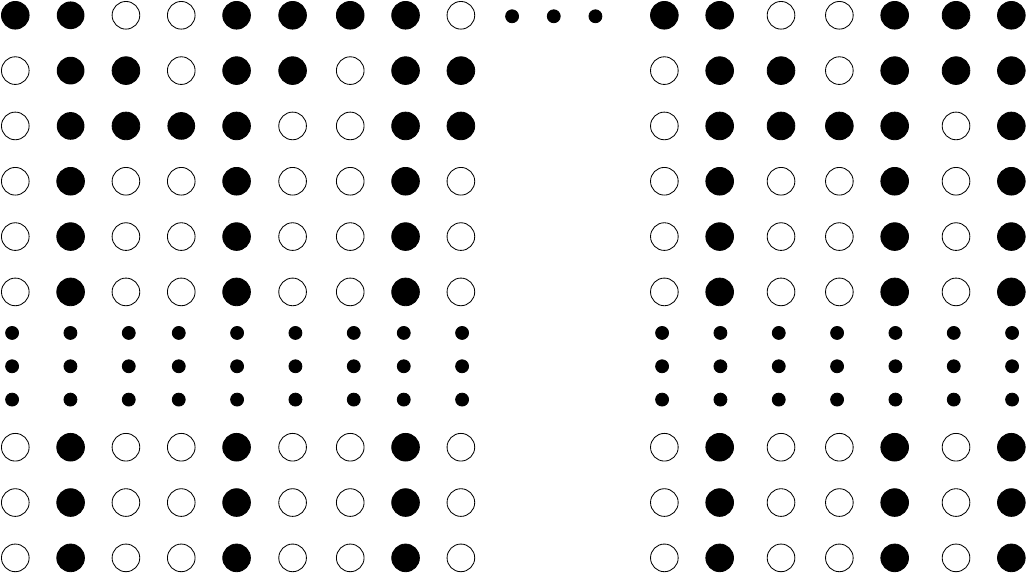}}
		\hspace{0.15in}
		\subfigure[$n=3k+2$, $k$ is even]{
			\label{n=3k+2evenCn} 
			\includegraphics[width=3.75cm]{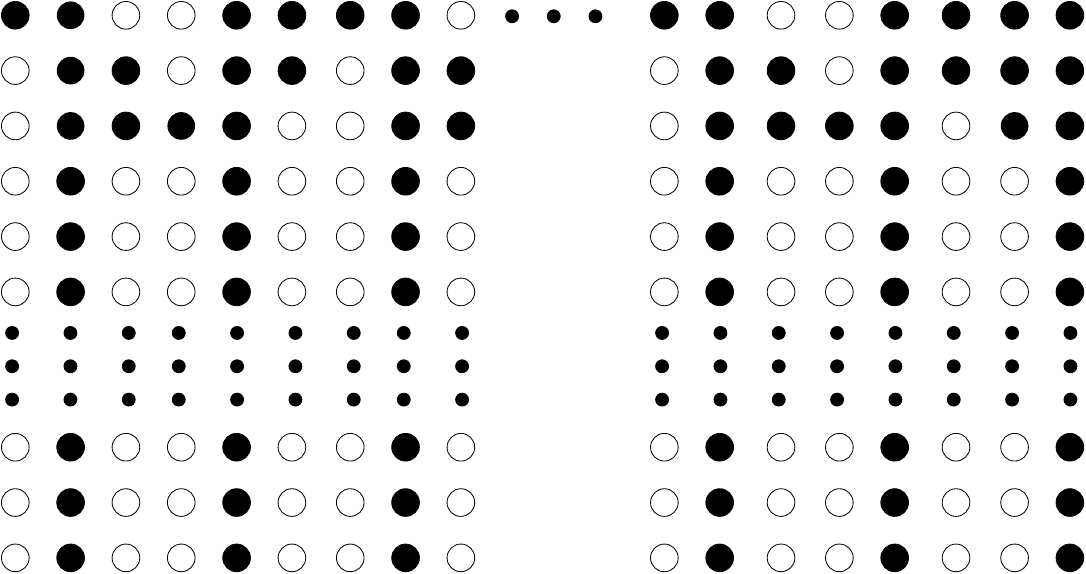}}
		\caption{A self-identifying code of $K_m\times C_n$, where $n$ is even.}\label{n=3k,3k+1,3k+2evenCn}
	\end{figure}
	\begin{figure}[H]	
		\centering
		\subfigure[$n=3k$, $k$ is odd]{
			\label{n=3koddCn} 
			\includegraphics[width=3.15cm]{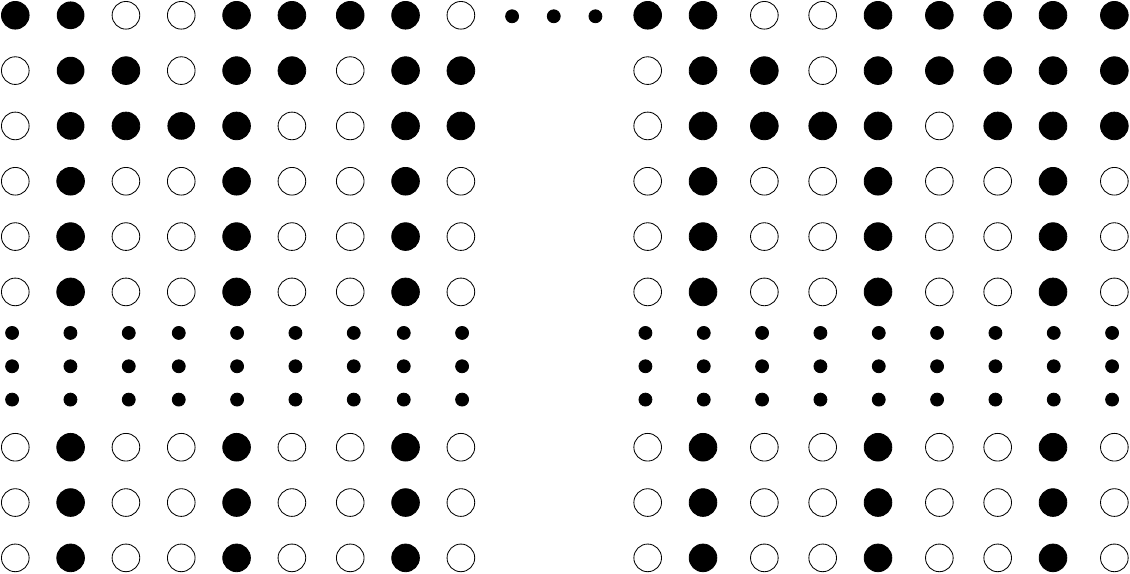}}
		\hspace{0.15in}
		\subfigure[$n=3k+1$, $k$ is odd]{
			\label{n=3k+1oddCn} 
			\includegraphics[width=3.35cm]{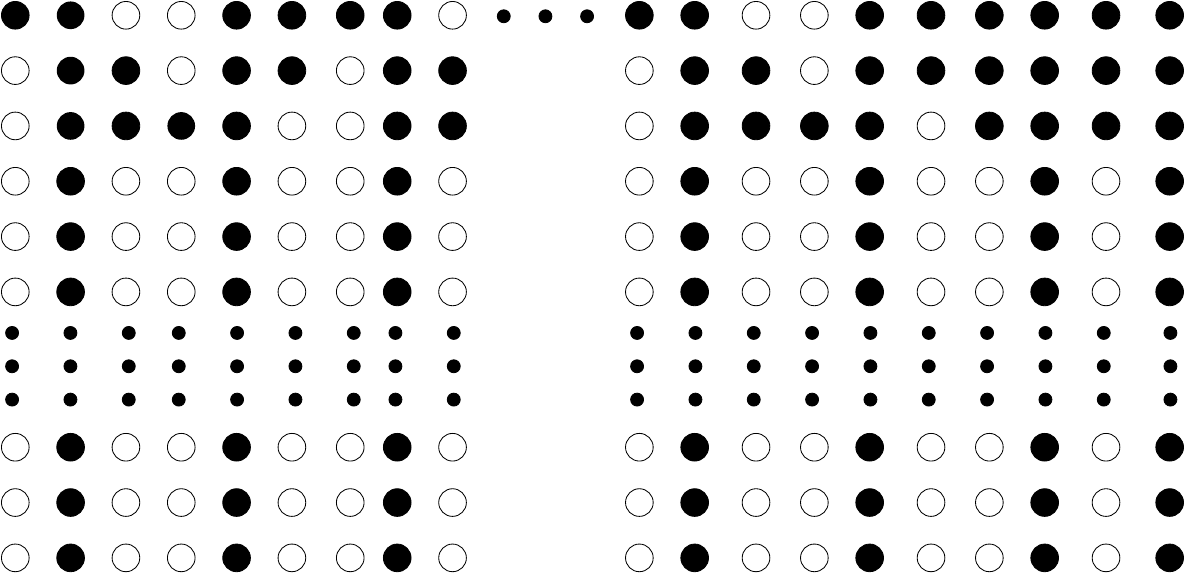}}
		\hspace{0.15in}
		\subfigure[$n=3k+2$, $k$ is odd]{
			\label{n=3k+2oddCn} 
			\includegraphics[width=3.75cm]{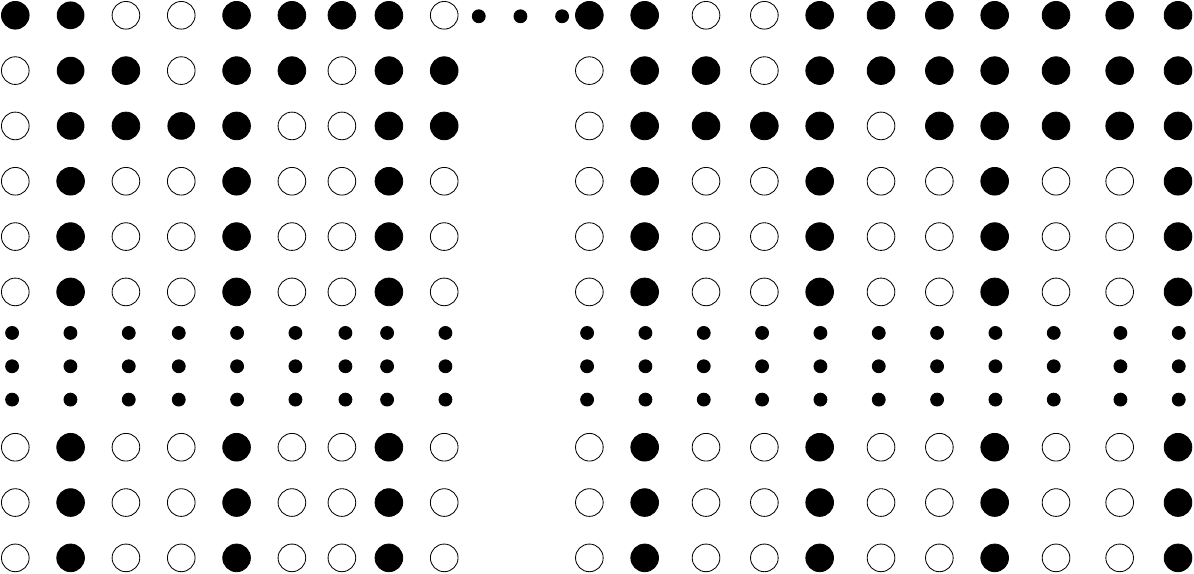}}
		\caption{A self-identifying code of $K_m\times C_n$, where $n$ is odd.}\label{n=3k,3k+1,3k+2oddCn}
	\end{figure}
	By Observation~\ref{factKmCn}, $S$ is a self-identifying code of $K_m \times C_n$. Thus for $m, n \geq 3$, 
	\[
	\gamma^{\mathrm{SID}}(K_m \times C_n) \leq \left\lceil \frac{n}{3} \right\rceil (m + 3) + 3
	\]
	Hence, we prove Theorem~\ref{thm:Cn-bound}.
	
	\section*{Acknowledgment}
	The authors thank the anonymous referees for their insightful suggestions, which improved the quality of this work. This research was supported by the Zhejiang Provincial Natural Science Foundation of China (Grant No. LQ24A010014) and the Zhejiang Provincial College Students’ Science and Technology Innovation Activity Plan (Xinmiao Talent Program) (Grant Nos. 2023R451014, 2024R429A011).

	\footnotesize

	\section*{Appendix A: Self-Identifying Codes in $K_m \times P_n$ for Small $n$}
	\label{app:A}
	This appendix establishes exact values of $\gamma^{\mathrm{SID}}(K_m \times P_n)$ for $m \geq 3$ and $3 \leq n \leq 6$, extending the bounds from Sections~\ref{nessaryconditionKmPn}--\ref{conboundKmPn}.
	\subsection*{A.1 Self-identifying code of $K_3 \times P_n$ for $3 \leq n \leq 6$}\label{K3Pn}
	
	\textbf{Theorem A.1.}
	For $K_3 \times P_n$,
	\[
	\gamma^{\mathrm{SID}}(K_3 \times P_n) = 
	\begin{cases}
		9 & n=3, \\
		12 & n=4,5, \\
		14 & n=6.
	\end{cases}
	\]
	
	\begin{proof}
		Lower bounds follow from Lemma~\ref{c0cn-1} and Corollary~\ref{c1cn-2}. Upper bounds are achieved by explicit constructions (Figure~\ref{m=3n=3456}),
		\begin{itemize}
			\item $S = V(K_3 \times P_3)$ for $n=3$
			\item $S = V(K_3 \times P_4)$ for $n=4$
			\item $S = C_0 \cup C_1 \cup C_3 \cup C_4$ for $n=5$
			\item $S = C_0 \cup C_1 \cup C_4 \cup C_5 \cup \{(v_2,2),(v_2,3)\}$ for $n=6$
		\end{itemize}
		Verification uses Observation~\ref{factKmPn}.
	\end{proof}
	
	\begin{figure}[H]
		\centering
		\subfigure[$n$=3]{\label{m=3n=3}\includegraphics[width=1.15cm]{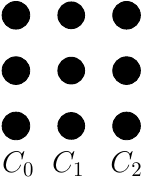}}
		\hspace{0.25in}
		\subfigure[$n=4$]{\label{m=3n=4}\includegraphics[width=1.5cm]{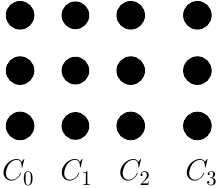}}
		\hspace{0.25in}
		\subfigure[$n=5$]{\label{m=3n=5}\includegraphics[width=1.8cm]{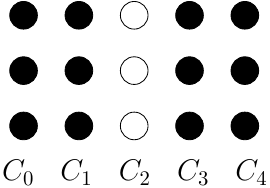}}    
		\hspace{0.25in}
		\subfigure[$n=6$]{\label{m=3n=6}\includegraphics[width=2.2cm]{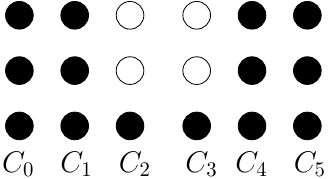}}
		\caption{A self-identifying code of $K_3\times P_n$ for $n=3,4,5,6$.}\label{m=3n=3456}
	\end{figure}
	
	\subsection*{A.2 Self-identifying code of $K_m \times P_3$ and $K_m \times P_4$ for $m \geq 4$}\label{KmP34}
	
	\textbf{Theorem A.2.}\label{thmn=34m=4...}
	For $m \geq 4$,
	\[
	\gamma^{\mathrm{SID}}(K_m \times P_n) = 
	\begin{cases}
		2m + 3 & n=3, \\
		2m + 6 & n=4.
	\end{cases}
	\]
	
	\begin{proof}
		Lower bounds follow from Lemma~\ref{c0cn-1} and Corollary~\ref{c1cn-2}. Upper bounds are achieved by explicit constructions (Figure~\ref{n=34m=4...}),
		\begin{itemize}
			\item $S = C_0 \cup C_2 \cup \{(v_0,1),(v_1,1),(v_2,1)\}$ for $n=3$,
			\item $S = C_0 \cup C_3 \cup \{(v_0,1),(v_1,1),(v_2,1),(v_0,2),(v_1,2),(v_2,2)\}$ for $n=4$.
		\end{itemize}
		Verification uses Observation~\ref{factKmPn}.
	\end{proof}
	
	\begin{figure}[H]
		\centering
		\subfigure[$n$=3]{\label{n=3m=4...}\includegraphics[width=1.3cm]{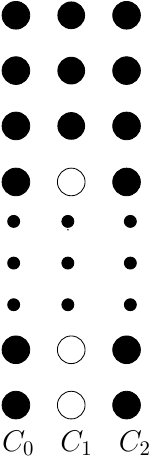}}
		\hspace{0.25in}
		\subfigure[$n=4$]{\label{n=4m=4...}\includegraphics[width=1.5cm]{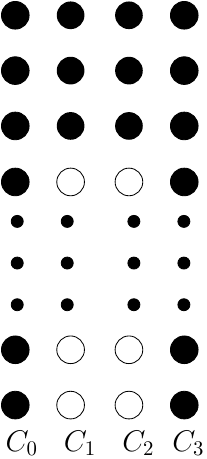}}
		\caption{A self-identifying code of $K_m\times P_n$ for $n\in \{3,4\}$ and $m\ge 4$.}\label{n=34m=4...}
	\end{figure}
	
	\subsection*{A.3 Self-identifying code of $K_m \times P_5$ and $K_m \times P_6$ for $m \geq 4$}\label{KmP56}
	
	\textbf{Theorem A.3.}\label{thmn=45m=4...}
	For $m \in \{4,5\}$,
	\[
	\gamma^{\mathrm{SID}}(K_m \times P_n) = 
	\begin{cases}
		2m + 6 & n=5, \\
		4m & n=6.
	\end{cases}
	\]
	
	\begin{proof}
		Lower bounds follow from Lemma~\ref{c0cn-1} and Corollary~\ref{c1cn-2}. Upper bounds are achieved by explicit constructions (Figure~\ref{n=56m=45}),
		\begin{itemize}
			\item $S = C_0 \cup C_4 \cup \{(v_0,1),(v_1,1),(v_2,1)\} \cup \{(v_{m-3},3),(v_{m-2},3),(v_{m-1},3)\}$ for $n=5$,
			\item $S = C_0 \cup C_5 \cup \{(v_0,1),(v_1,1),(v_2,1),(v_0,4),(v_1,4),(v_2,4)\} \cup \bigcup_{i=3}^{m-1} \{(v_i,2),(v_i,3)\}$ for $n=6$
		\end{itemize}
	\end{proof}
	\begin{figure}[H]
		\centering
		\subfigure[$n$=5, $m$=4]{\label{n=5m=4}\includegraphics[width=2.2cm]{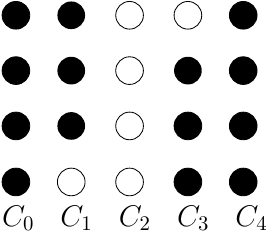}}
		\hspace{0.25in}
		\subfigure[$n$=$m$=5]{\label{n=5m=5}\includegraphics[width=2cm]{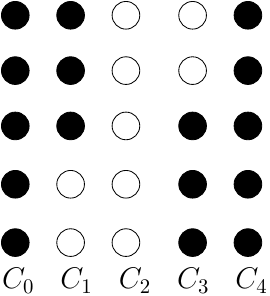}}
		\hspace{0.25in}
		\subfigure[$n$=6, $m$=4]{\label{n=6m=4}\includegraphics[width=2.3cm]{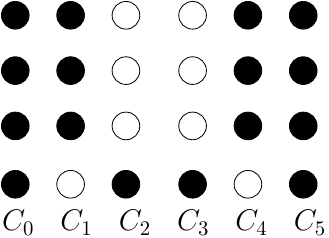}}
		\hspace{0.25in}
		\subfigure[$n$=6, $m$=5]{\label{n=6m=5}\includegraphics[width=2.4cm]{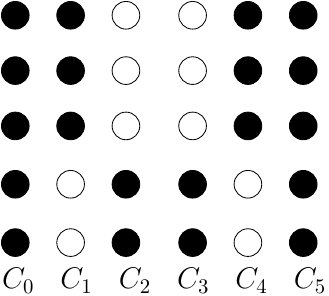}}
		\caption{A self-identifying code of $K_m\times P_n$ for $n\in\{5,6\}$ and $m\in\{4,5\}$.}\label{n=56m=45}
	\end{figure}
	
	\textbf{Theorem A.4.}\label{thmn=56m=6...}
	For $m \geq 6$,
	\[
	\gamma^{\mathrm{SID}}(K_m \times P_n) = 
	\begin{cases}
		3m & n=5, \\
		3m + 6 & n=6.
	\end{cases}
	\]
	
	\begin{proof}
		Lower bounds follow from Lemma~\ref{c0cn-1} and Corollary~\ref{c1cn-2}. Upper bounds are achieved by explicit constructions (Figure~\ref{n=56m=6...}),
		\begin{itemize}
			\item $S = C_0 \cup C_4 \cup \{(v_0,1),(v_1,1),(v_2,1)\} \cup \bigcup_{i=3}^{m-1} \{(v_i,3)\}$ for $n=5$,
			\item $S = C_0 \cup C_5 \cup C_2 \cup \{(v_0,1),(v_1,1),(v_2,1),(v_0,4),(v_1,4),(v_2,4)\}$ for $n=6$
		\end{itemize}
	\end{proof}
	
	\begin{figure}[H]
		\centering
		\subfigure[$n$=5, $m \geq $6]{\label{n=5m=6...}\includegraphics[width=2.2cm]{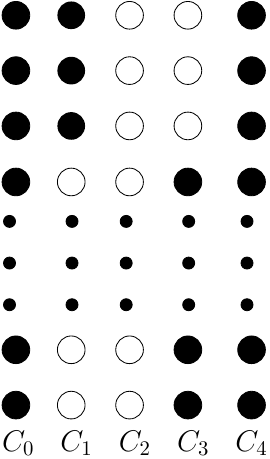}}
		\hspace{0.25in}
		\subfigure[$n$=6, $m \geq$ 6]{\label{n=6m=7...}\includegraphics[width=2.3cm]{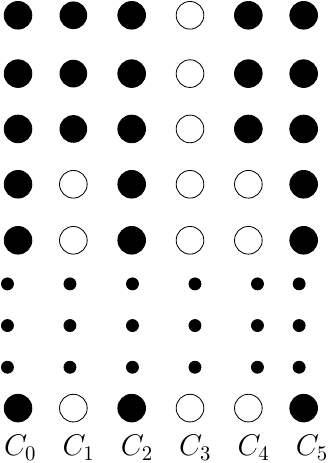}}
		\caption{A self-identifying code of $K_m\times P_n$ for $n=5,6$ and $m\ge 6$.}\label{n=56m=6...}
	\end{figure}

\begin{thebibliography}{00}
		
		
		\bibitem{Bondy08} J. A. Bondy, U. S. R. Murty, {\it Graph Theory}, Springer, London, 2008.
		
		\bibitem{Goddard2013} W. Goddard, K. Wash, Id codes in cartesian products of cliques, {\it J. Comb. Math. Comb. Comput.} {\bf 85} (2013) 97--106.
		
		\bibitem{Gravier2008} S. Gravier, J. Moncel, A. Semri, Identifying codes of cartesian product of two cliques of the same size, {\it Electron. J. Comb.} {\bf 15} (2008), no. 1.
		
		\bibitem{Gravier2015} S. Gravier, A. Parreau, S. Rottey, L. Storme, \'E. Vandomme, Identifying codes in vertex-transitive graphs and strongly regular graphs, {\it Electron. J. Comb.} {\bf 22} (2015), no. 4, P4.6.
		
		\bibitem{Honkala2001} I. Honkala, T. Laihonen, S. Ranto, On codes identifying sets of vertices in hamming spaces, {\it Des. Codes Cryptogr.} {\bf 24} (2001) 193--204.
		
		\bibitem{Honkala2006} I. Honkala, M. G. Karpovsky, L. B. Levitin, On robust and dynamic identifying codes, {\it IEEE Trans. Inf. Theory} {\bf 52} (2006) 599--612.
		
		\bibitem{Honkala2007} I. Honkala, T. Laihonen, On a new class of identifying codes in graphs, {\it Inf. Process. Lett.} {\bf 102} (2007), no. 2, 92--98.
		
		\bibitem{Hudry2016} O. Hudry, A. Lobstein, More results on the complexity of identifying problems in graphs, {\it Theoret. Comput. Sci.} {\bf 626} (2016) 1--12.
		
		\bibitem{JeanLobstein2026} D. Jean, Watching systems, identifying, locating-dominating and discriminating codes in graphs, 2026, \url{https://dragazo.github.io/bibdom/main.pdf}
		
		\bibitem{Jean2025} D. Jean, S. Seo, Progress on Self Identifying Codes, arXiv preprint arXiv:2504.14124, 2025.
		
		
		\bibitem{Junnila2019} V. Junnila, T. Laihonen, G. Paris, Optimal bounds on codes for location in circulant graphs, {\it Cryptogr. Commun.} {\bf 11} (2019), no. 4, 621--640.
		
		\bibitem{Junnila2020} V. Junnila, T. Laihonen, Tolerant location detection in sensor networks, {\it Adv. Appl. Math.} {\bf 112} (2020) 101938.
		
		\bibitem{Karpovsky1998} M. G. Karpovsky, K. Chakrabarty, L. B. Levitin, On a new class of codes for identifying vertices in graphs, {\it IEEE Trans. Inf. Theory} {\bf 44} (1998), no. 2, 599--611.
		
		
		\bibitem{MATEMATIIKAN2022} Matematiikan ja Tilastotieteen Laitos, {\it On lower bounds of various dominating codes for locating vertices in cubic graphs}, 2022.
		
		
		\bibitem{Shinde2023} N. V. Shinde, S. A. Mane, B. N. Waphare, Identifying codes in the direct product of a path and a complete graph, {\it Discuss. Math. Graph Theory} {\bf 43}(2023) 463–486.
		
		\bibitem{Slater1988} P. J. Slater, Dominating and reference sets in a graph, {\it J. Math. Phys. Sci.} {\bf 22} (1988) 445--455.
		
		\bibitem{Song2021} S. J. Song, W. Zhang, C. Xu, Locating and identifying codes in circulant graphs, {\it Discrete Dyn. Nat. Soc.} {\bf 2021} (2021) 1--9.
		
		\bibitem{West2001} D. B. West, {\it Introduction to graph theory}, Prentice Hall, Upper Saddle River, NJ, 2001.
		
	\end{thebibliography}
\end{document}